\newtheorem{theorem}{Theorem}[subsection]
\newtheorem{lemma}[theorem]{Lemma}
\newtheorem{cor}[theorem]{Corollary}
\newcommand{\T}{\mathbb{T}} 
\newcommand{\onto}{\mbox{$\to\!\!\!\!\to$}}
\newcommand{\Tmod}{\text{$\T$-{\rm Mod}}}
\renewcommand{\marginpar}[2][]{}
\newcommand{\C}{\mathbb{C}}     
\newcommand{\Z}{\mathbb{Z}}     
\newcommand{\cA}{\mathcal{A}}
\newcommand{\cB}{\mathcal{B}}
\newcommand{\cC}{\mathcal{C}}
\newcommand{\cM}{\mathcal{M}}
\newcommand{\cR}{\mathcal{R}}
\newcommand{\cV}{\mathcal{V}}
\begin{document}

\title{Definable categories and $\T$-motives}

\author{Luca Barbieri-Viale}
\address{Dipartimento di Matematica ``F. Enriques", Universit{\`a} degli Studi di Milano\\ Via C. Saldini, 50\\ I-20133 Milano\\ Italy}
\email{luca.barbieri-viale@unimi.it}

\author{Mike Prest}
\address{School of Mathematics, University of Manchester, Oxford Road, Manchester
M13 9PL, UK}
\email{mprest@manchester.ac.uk}


\begin{abstract}
Making use of Freyd's free abelian category on a preadditive category we show that if $T:D\rightarrow \cA$ is a representation of a quiver $D$ in an abelian category $\cA$ then there is an abelian category $\cA (T)$,  a faithful exact functor $F_T: \cA (T) \to \cA$ and an induced representation $\tilde T: D \to  \cA (T)$ such that $F_T\tilde T= T$ universally. We then can show that $\T$-motives as well as Nori's motives are given by a certain category of functors on definable categories.
\end{abstract}
\thanks{The first author acknowledges the support of the {\it Ministero dell'Istruzione,  dell'Universit\`a e della Ricerca} (MIUR)  through the Research Project  (PRIN 2010-11) ``Arithmetic Algebraic Geometry and Number Theory''. 
The second author acknowledges the support of the {\it Engineering and Physical Sciences Research Council} (EPSRC) through the research grant EP/K022490/1, ``Interpretation functors and infinite-dimensional representations of finite-dimensional algebras".}

\maketitle

\section*{Introduction}

In \cite{NN}, see also \cite{HMN}, Nori, starting with the category ${\mathcal V}_k$ of algebraic varieties, i.e. separated schemes of finite type, over $k$ a subfield of the complex numbers $\C$, constructed an abelian category which is an avatar of the hypothesized category of effective homological mixed motives.  See \cite{A} for an introduction to the motivic world. 

First he builds the diagram, or {\it quiver}, which has, for vertices, triples $(X,Y,i)$ with $X\in {\mathcal V}_k$, $Y$ a closed subvariety of $X$ and $i\geq 0$ an integer.  If $f:X\rightarrow X'$ is a morphism in ${\mathcal V}_k$ with $f(Y)\subseteq Y'$ then there is a corresponding arrow from $(X,Y,i)$ to $(X',Y',i)$ for each $i$.  There is also an arrow, for each pair of closed subvarieties $Z\subseteq Y \subseteq  X$, from $(X,Y,i)$ to $(Y,Z, i-1)$ corresponding to the boundary map in the long exact sequence of singular homology groups.

He then considers the representation of this diagram, given by singular homology of a pair, mapping $(X,Y,i)$ to the finitely generated abelian group $H_i^{\rm sing}(X (\C),Y(\C))$.  From this, he constructs an abelian category {\rm EHM} through which this, and any other reasonable (co)ho-mology theory should factor.  More generally, in fact, such a universal abelian category exists for any representation in the category $R\mbox{-}{\rm mod}$, of finitely generated modules over a commutative Noetherian ring $R$.\\

\noindent {\bf Theorem {\rm (Nori)}.}  {\it Let $D$ be a quiver and $T:D\rightarrow R\mbox{-}{\rm mod}$ a representation of $D$. There is an abelian $R$-linear category ${\mathcal C}(T)$, a $R$-linear faithful exact functor $F_T: {\mathcal C} (T) \to R\mbox{-}{\rm mod}$ and a representation $\tilde T :D \to  {\mathcal C} (T)$ such that $F_T\tilde T= T$ universally.}\\

See \cite[Chapter 6]{HMN} for a detailed proof. The universal property goes as follows. Let ${\mathcal B}$ be an abelian $R$-linear category, $G:{\mathcal B}\rightarrow R\mbox{-}{\rm mod}$ a $R$-linear faithful exact functor, and $S:D\rightarrow {\mathcal B}$ a representation of $S$ in ${\mathcal B}$ such that $G S= T$.  Then there is a $R$-linear faithful exact functor $F_S: {\mathcal C}(T)\rightarrow {\mathcal B}$ unique (up to unique isomorphism) such that the following diagram commutes (up to isomorphism):
$$\xymatrix{&  {\mathcal C} (T)\ar@{.>}[d]^{F_S} \ar@/^/[dr]^{F_T} & \\
D \ar@/_1.2pc/[rr]_-{T}\ar@/^/[ur]^-{\tilde T}\ar[r]^-{S} &  {\mathcal B}\ar[r]^-{G}  & R\mbox{-}{\rm mod}\\
} $$
The original construction of ${\mathcal C}(T)$ is not straightforward and goes {\it via} a construction, involving comodules, for finite subquivers of $D$ and then taking a 2-colimit of abelian categories. Under additional conditions on $R$, e.g. if it is a field, we have that ${\mathcal C}(T)$ itself is a category of comodules over a coalgebra.  Note that in \cite{ABV} it is proven that Deligne's 1-motives can be obtained via Nori's construction.

In \cite{BVCL}, Caramello gives a proof of Nori's theorem by a very different, more general and rather direct construction, obtaining the category ${\mathcal C}(T)$ as the Barr exact completion or effectivization of the syntactic category of the regular theory of the representation $T$. This category is indeed abelian and it has the required universal property even for representations in all $R$-modules, the latter following from general properties of models of a regular theory.

Subsequently, in \cite{BV}, for any fixed base category $\cC$ along with a distinguished subcategory $\cM$, a regular homological theory $\T$ is introduced on a signature strongly related to Nori's diagram when $\cC$ is $\cV_k$ and $\cM$ is given by closed immersions. The exact completion of the regular syntactic category of $\T$ is an abelian category $\cA[\T]$ which is universal with respect to $\T$-models in abelian categories. Nori's category {\rm EHM} can be obtained as $\cA[\T_{H^{\rm sing}}]$ for the regular theory $\T_{H^{\rm sing}}$ of the model $H^{\rm sing}$ (singular homology) by adding to $\T$ all regular axioms which are valid in $H^{\rm sing}$. The category $\cA[\T]$ is the category of constructible $\T$-motives in \cite{BV}.\\

In fact, in the additive context, a direct algebraic construction may be given of both $\cC (T)$ and $\cA[\T]$, which makes use of Freyd's free abelian category on a preadditive category \cite{F}. We use \cite[Chapter 4]{PMAMS} as a reference for this, see also \cite{PCRM}. We also make use of definable additive categories which are exactly the categories of models of regular  additive theories in abelian groups. In fact, all the previously mentioned  constructions can be deduced from the following result.\\

\noindent {\bf Theorem.} {\it Let $D$ be a quiver. There is an abelian category ${\rm Ab} (D)$ and a universal representation $\Delta: D \to  {\rm Ab} (D)$, i.e. if $T:D\rightarrow \cA$ is a representation of $D$ in an abelian category $\cA$ then there is a unique exact functor $F: {\rm Ab} (D) \to \cA$ such that $F \Delta = T$. Furthermore, there is a Serre quotient $\pi :
{\rm Ab} (D)\onto \cA (T)$ along with a faithful exact functor $F_T: \cA (T) \to \cA$ and an induced representation $\tilde T: D \to  \cA (T)$ such that $F_T\tilde T= T$ universally.}\\

The universal property here can be visualized by the following commutative diagram
$$\xymatrix{&  {\rm Ab} (D)\ar@{>>}[d]^{\pi} \ar@/^/[dr]^{F} & \\
D \ar@/_1.2pc/[rr]_-{T}\ar@/^/[ur]^-{\Delta}\ar[r]^-{\tilde T}\ar@/_/[dr]_-{S} &  \cA (T)\ar[r]^-{F_T}\ar@{.>}[d]^{F_S}  & \cA\\
&  \cB\ar@/_/[ur]_-{G}& \\} $$
where $G S= T$,  $\cB$ is abelian, $G$ and $F_S$ are faithful exact. The functors $F$, $F_T$ and $F_S$ are unique up to natural equivalence.

There is an easy $R$-linear variant of this Theorem, where ${\rm Ab} (D)$ has to be replaced by a universal $R$-linear abelian category ${\rm Ab}_R (D)$, showing Nori's Theorem in the particular case when $\cA = R\mbox{-}{\rm mod}$.

The construction  of this abelian category $\cA (T)$ attached to a representation $T$ gives us, in addition, an interpretation of $\cA (T)$ as a certain category of functors on the definable category generated by $T$ (that is, the category of models of its regular theory).

We also describe another interpretation of $\cA (T)$, as the category of pp-pairs and pp-defined maps, already present as a consequence of the previous construction (we remark that ``regular formula" and ``pp formula" are alternative and equivalent terminologies). This interpretation sheds some light on the category $\cA[\T]$, presented here as a Serre quotient of ${\rm Ab}(D)$ where $D$ is the canonical diagram associated to a pair $(\cC, \cM)$ as mentioned above, and also on its Serre quotients $\cA[\T']$ obtained by adding regular axioms to the homological theory $\T$ as explained in \cite{BV}. \\

The plan of this paper is the following. In Section 1 we provide a proof of the Theorem above and its $R$-linear variant. In Section 2 we give a description of ${\rm Ab}(D)$ and $\cA (T)$ in terms of definable categories and we show the link with $\T$-motives.

\subsection*{Notation} We shall denote by ${\rm Ab}$ the category of abelian groups. For $\cA$ and $\cB$ preadditive categories $(\cA, \cB)$ shall denote the category of additive functors from $\cA$ to $\cB$ where we tacitly assume that $\cA$ is skeletally small, i.e. it  has a set of objects up to isomorphism.

\section{The universal representation of a quiver}
Here we construct the universal representation $\Delta: D \to  {\rm Ab} (D)$ of a quiver.
We show how any representation $T$ of $D$ in an abelian category $\cA$ lifts to an exact functor $F :{\rm Ab} (D)\to \cA$ between abelian categories. We also describe an $R$-linear variant  of this construction.

\subsection{Representations and modules over path algebras} Let's start with a diagram, or quiver, $D$, that is, a directed graph, given by a set $D_0$ of vertices and a set $D_1$ of arrows, plus source and target maps from $D_1$ to $D_0$.  Then a {\it representation} $T$ of $D$ in an abelian (or more generally, preadditive) category ${\mathcal A}$ is an assignment of, for each $s\in D_0$, an object $T_s \in {\mathcal A}$, and for each arrow $\alpha: s\rightarrow t$ in $D_1$, a morphism $T_\alpha :T_s \rightarrow T_t$ of ${\mathcal A}$.

The category ${\rm Rep}_{\mathcal A}(D)$ of ${\mathcal A}$-representations of $D$ has these for its objects.  A morphism $f:T\rightarrow S$ between representations is a collection $(f_s)_{s\in D_0}$ of morphisms in ${\mathcal A}$ with $f_s:T_s\rightarrow S_s$ such that, for every $\alpha\in D_1$, $\alpha:s\rightarrow t$, we have
$$\xymatrix{T_s \ar[r]^{T_\alpha} \ar[d]_{f_s} & T_t \ar[d]^{f_t} \\ S_s \ar[r]_{S_\alpha} & S_t}$$
a commutative square in the category $\cA$.

Note that any representation $T: D\to {\mathcal A}$ extends uniquely to a functor $\overline{T} : \overline{D}\to \cA$ from the {\it path category} $\overline{D}$ of $D$.

If we start with a skeletally small category ${\mathcal C}$ then by a representation of ${\mathcal C}$ in ${\mathcal A}$ we mean a {\it functor} from ${\mathcal C}$ to ${\mathcal A}$.  Note that any such functor extends uniquely to an additive functor from the abelian enrichment ${\mathbb Z}{\mathcal C}$ of ${\mathcal C}$, which is defined to have the same objects of ${\mathcal C}$ and to have, for group of morphisms from $c$ to $d$, the free ${\mathbb Z}$-module on ${\mathcal C}(c,d)$, with composition being defined in the obvious way.

If we have a skeletally small preadditive category ${\mathcal R}$, for example one of the above form ${\mathbb Z}{\mathcal C}$, then we may regard this as a ring with many objects.  In this case a representation of ${\mathcal R}$ in ${\mathcal A}$ will mean an {\it additive functor} from ${\mathcal R}$ to ${\mathcal A}$ and is usually referred to as a {\it left ${\mathcal R}$-module} $M : {\mathcal R} \rightarrow {\mathcal A}$.

If $D$ is our diagram then the abelian enrichment ${\mathbb Z}\overline{D}$ of the path category may be identified with the ${\mathbb Z}$-{\it path algebra}, ${\mathbb Z}D$, of $D$.  This is formed by taking the free ${\mathbb Z}$-module on basis $D_0 \cup D^\ast_1$ where the latter denotes the set of all {\it paths} formed from arrows in $D_1$, that is, sequences $\alpha_n \dots\alpha_1$ where the source of $\alpha_{i+1}$ is the target of $\alpha_i$ for each $i$ (so if $\alpha$ is a loop then all powers of $\alpha$ are in $D_1^\ast$).  To define the multiplication on ${\mathbb Z}D$ it is enough to define it on basis elements, where it is composition when defined, and $0$ when not (i.e.~if the target of the path $p$ is a different vertex from the source of the path $q$ then $qp=0$).  The arrows corresponding to the vertices of $D$ act as local identities. If there are just finitely vertices then ${\mathbb Z}D$ is equivalent to a ring, in general to a small preadditive category.

There is a unique extension of the representation $T :D\to \cA$ to an additive functor $M:{\mathbb Z}D \rightarrow {\mathcal A}$.  On objects, this agrees with $T$ and, if $\alpha_i\in D(d,d')$ and $n_i\in {\mathbb Z}$, then $M(\sum_i n_i\alpha_i) = \sum_i n_iT_{\alpha_i}$.

\begin{lemma}\label{rep} The category ${\rm Rep}_{\mathcal A}(D)$ is naturally equivalent to the category
$(\Z D , \cA)$ of additive functors from ${\mathbb Z}D$ to ${\mathcal A}$.  In particular, the category of representations of $D$ in ${\rm Ab}$ is naturally equivalent to the category ${\mathbb Z}D\mbox{-}{\rm Mod}$ of left ${\mathbb Z}D$-modules.
\end{lemma}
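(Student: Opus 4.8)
The plan is to exhibit an explicit equivalence — in fact an isomorphism — of categories by constructing quasi-inverse functors $\Phi$ and $\Psi$ and checking that both composites are the identity. First I would define $\Phi\colon {\rm Rep}_{\mathcal A}(D)\to (\Z D,\cA)$. On objects it sends a representation $T$ to its unique additive extension $M\colon \Z D\to\cA$, whose existence and uniqueness are recorded in the paragraph preceding the lemma (extend $T$ first to a functor $\overline T$ on the path category $\overline D$, then $\Z$-linearize to $\Z\overline D=\Z D$). On a morphism $f=(f_s)_{s\in D_0}\colon T\to S$ of representations I would set $\Phi(f)=(f_s)_{s\in D_0}$ again, now viewed as a candidate natural transformation $M\Rightarrow N$ between the extensions. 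The one point with content is that the naturality square of $\Phi(f)$ at an \emph{arbitrary} morphism of $\Z D$ follows from the defining squares of $f$ at the arrows of $D$: a basis morphism of $\Z D$ is a path $\alpha_n\cdots\alpha_1$, whose naturality square is obtained by horizontally pasting the $n$ squares coming from the definition of $f$ (an induction on path length, which also covers the powers of a loop, all of which are distinct basis elements of $\Z D$), and naturality at a general $\Z$-linear combination of paths then follows from bilinearity of composition in $\cA$. Functoriality of $\Phi$ is immediate, since composition and identities of morphisms of representations, and of natural transformations, are both computed componentwise at the vertices.

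In the other direction I would define $\Psi\colon (\Z D,\cA)\to {\rm Rep}_{\mathcal A}(D)$ by restriction along the evident inclusion of the quiver $D$ into $\Z D$ (a vertex $s$ going to the corresponding local identity, an arrow $\alpha$ to itself): thus $\Psi(M)_s=M(s)$, $\Psi(M)_\alpha=M(\alpha)$, and $\Psi(\eta)=(\eta_s)_{s\in D_0}$ on a natural transformation $\eta$. This is patently a functor. It then remains to compare the composites. By construction $\Psi\Phi(T)$ has the same objects $T_s$ and the same values $T_\alpha$ on arrows as $T$, and likewise $\Psi\Phi$ fixes morphisms, so $\Psi\Phi={\rm id}$. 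For $\Phi\Psi$: given $M\colon\Z D\to\cA$, the additive functor $\Phi\Psi(M)$ has the same restriction to $D$ as $M$, and by the uniqueness clause of the extension statement an additive functor on $\Z D$ is determined by that restriction, so $\Phi\Psi(M)=M$; the same holds on morphisms because a natural transformation between two such functors is determined by its components at the objects of $\Z D$ — all of which are the vertices $s\in D_0$ — which is exactly the naturality-at-arrows argument used for $\Phi$ above. Hence $\Phi$ and $\Psi$ are mutually inverse, giving the asserted equivalence; and taking $\cA={\rm Ab}$ yields the final assertion, since $(\Z D,{\rm Ab})$ is by definition the category $\Z D\mbox{-}{\rm Mod}$ of left $\Z D$-modules.

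I do not expect a genuine obstacle: the proof is essentially the universal property of the path algebra as the free preadditive category on the quiver. The only steps needing a little care are the inductive pasting of naturality squares along the length of a path, and the usual caveat that $\Z D$ is a small preadditive category (a ring with several objects) rather than a ring when $D_0$ is infinite, so that ``module'' and ``additive functor $\Z D\to{\rm Ab}$'' must be read in that generalized, but completely standard, sense.
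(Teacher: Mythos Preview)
Your proposal is correct and follows essentially the same approach as the paper: the paper's proof simply says that one direction is the extension construction described in the paragraph preceding the lemma and the inverse is restriction of a $\Z D$-module to its values on vertices and arrows of $D$. You have spelled out the details (in particular the pasting argument for naturality along paths and the passage to $\Z$-linear combinations) that the paper leaves implicit.
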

\begin{proof}
If we think of a ${\mathbb Z}D$-module as an additive functor from the preadditive category ${\mathbb Z}\overline{D}$ then, from representations of $D$ to ${\mathbb Z}D$-modules is the construction above, with the inverse being restriction of a ${\mathbb Z}D$-module to its values on the vertices and arrows of $D$.
\end{proof}

\subsection{Freyd's free abelian category}
So let us suppose that we are given a representation of one of the above forms.  For uniformity of treatment, we may as well assume this be an additive functor $M:{\mathcal R} \rightarrow {\mathcal A}$ with ${\mathcal A}$ abelian and $\cR$ preadditive.  Then there is an embedding of ${\mathcal R}$ into an abelian category (which does not depend on $M$) and an essentially unique lift to an exact functor as below.

\begin{theorem}[\protect Freyd, \cite{F}]\label{Freyd} Given a skeletally small preadditive category ${\mathcal R}$, there is a full and faithful embedding ${\mathcal R} \rightarrow {\rm Ab}({\mathcal R})$ such that, for any  additive functor $M:{\mathcal R} \rightarrow {\mathcal A}$, where ${\mathcal A}$ is abelian, there is a unique-to-natural-equivalence exact extension $F:{\rm Ab}({\mathcal R}) \rightarrow {\mathcal A}$ as follows
$$\xymatrix{{\mathcal R} \ar[r] \ar[d]_M & {\rm Ab}({\mathcal R}) \ar@{.>}[dl]^{F} \\  {\mathcal A}}$$
\end{theorem}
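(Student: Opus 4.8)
The plan is to realise ${\rm Ab}(\cR)$ as a small abelian category built directly from $\cR$, to describe the universal $F$ by a matching evaluation construction, and to locate the real work in the \emph{exactness} of $F$. Concretely I would obtain ${\rm Ab}(\cR)$ by applying to the additive hull of $\cR$ the Adelman-style construction of the free abelian category on an additive category --- equivalently one may describe ${\rm Ab}(\cR)$ as the category of pp-pairs over $\cR$, or as Freyd's two-step ``adjoin cokernels, then adjoin kernels'' completion. At the outset I would pass to this additive hull $\cP$ of $\cR$ (finite biproducts of objects of $\cR$ and their retracts): an additive functor from $\cR$ into an abelian, hence additive and idempotent complete, category $\cA$ extends uniquely along $\cR \hookrightarrow \cP$, and a full and faithful embedding of $\cP$ restricts to one of $\cR$, so it suffices to treat $\cP$.

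I would then take for ${\rm Ab}(\cR)$ the category whose objects are the composable pairs $P_{-1} \xrightarrow{a} P_0 \xrightarrow{b} P_1$ in $\cP$ with $ba = 0$ --- to be thought of as ``$\ker b / {\rm im}\, a$'' --- and whose morphisms are the chain maps between such complexes, taken modulo Adelman's homotopy relation. I would invoke \cite{F} (see also \cite[Chapter~4]{PMAMS} and \cite{PCRM}) for the fact that this is a skeletally small abelian category, the point being that kernels and cokernels in it are computed by suitably shifting the complexes, the two image factorisations of a morphism then coinciding. I would check directly that $\cP \hookrightarrow {\rm Ab}(\cR)$, $P \mapsto (0 \to P \to 0)$, is full and faithful --- the homotopy relation makes no identifications among maps of complexes of this special shape --- so that the composite $\cR \hookrightarrow \cP \hookrightarrow {\rm Ab}(\cR)$ is full and faithful and is the asserted embedding; moreover the construction is arranged so that the objects of $\cR$ generate ${\rm Ab}(\cR)$ under finite biproducts, kernels and cokernels.

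For the universal property, let $M \colon \cR \to \cA$ be additive, extended uniquely to a functor $\cP \to \cA$ still written $M$. I would define $F \colon {\rm Ab}(\cR) \to \cA$ on objects by $F\bigl(P_{-1} \xrightarrow{a} P_0 \xrightarrow{b} P_1\bigr) := \ker(Mb)/{\rm im}(Ma)$, which is legitimate since $\cA$ is abelian, and on morphisms by the maps induced on these subquotients. I would then verify in turn: (i) $F$ is well defined on homotopy classes and additive; (ii) $F$ composed with the embedding is naturally isomorphic to $M$, the complex $0 \to P \to 0$ being sent to $M(P)$; (iii) $F$ is exact; (iv) $F$ is unique up to natural equivalence --- for (iv), any exact $G$ extending $M$ agrees with $F$ on the full subcategory of objects on which a chosen isomorphism $G \cong F$ can be made compatible, and this subcategory is closed under finite biproducts, kernels and cokernels and contains the image of $\cR$, hence equals ${\rm Ab}(\cR)$. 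It is worth recording here the model-theoretic reading used in the rest of the paper: the objects of ${\rm Ab}(\cR)$ correspond to pairs $\phi/\psi$ of pp (positive-primitive) formulas over the ``ring with many objects'' $\cR$, and then $F$ is just evaluation of the pp-pair on the module $M$, namely $F(\phi/\psi) = \phi(M)/\psi(M)$.

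The step I expect to be the main obstacle is (iii). Right-exactness of $F$ is comparatively soft: it reflects that $F$ is essentially a left Kan extension along the inclusion of a generating, projective-like family. Left-exactness is the delicate point, and it is exactly where the particular shape of the construction has to be matched against honest kernels in $\cA$ --- in the Adelman description, by controlling what the shifting of complexes does after applying $M$; equivalently, in the two-step description, the kernels of ${\rm Ab}(\cR)$ are manufactured from cokernels one level down, and one must check that $M$ carries these to genuine kernels. In the pp language, (iii) is precisely the statement that a short exact sequence of pp-pairs is taken by evaluation at $M$ to a short exact sequence in $\cA$, a standard property of pp-definable subgroups. The details of all of this are carried out in \cite{F} and \cite[Chapter~4]{PMAMS}.
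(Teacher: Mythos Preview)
Your proposal is correct in outline, but it takes a genuinely different route from the paper. The paper does not give a self-contained proof of Freyd's theorem; it cites \cite{F} and \cite[Chapter~4]{PMAMS}, and then in Section~1.3 unpacks the construction in the special case ${\mathcal R}=\Z D$. There the realisation is the \emph{two-step Yoneda} one: first embed ${\mathcal R}^{++}$ into $({\mathcal R}\mbox{-}{\rm mod})^{\rm op}$ via $X\mapsto (X,-)$, then embed $({\mathcal R}\mbox{-}{\rm mod})^{\rm op}$ into $({\mathcal R}\mbox{-}{\rm mod},{\rm Ab})^{\rm fp}$ via $N\mapsto (N,-)$, and set ${\rm Ab}({\mathcal R}) := ({\mathcal R}\mbox{-}{\rm mod},{\rm Ab})^{\rm fp}$. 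The extension $F$ is built in two matching stages: first a left exact $F'$ on $({\mathcal R}\mbox{-}{\rm mod})^{\rm op}$ by taking kernels of $M^{++}$ applied to a projective presentation, then $F$ on $({\mathcal R}\mbox{-}{\rm mod},{\rm Ab})^{\rm fp}$ by taking cokernels of $F'$ applied to a presentation. Exactness of $F$ is then either checked directly or, as the paper notes, read off from the equivalence with the category of pp-pairs.

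You instead use the Adelman model: three-term complexes in the additive hull modulo homotopy, with $F$ given by middle homology after applying $M$. This is a legitimate and well-known equivalent construction, and your identification of exactness as the crux is exactly right. Two remarks. First, Adelman's original construction does \emph{not} impose $ba=0$; one takes all composable pairs, and the induced functor sends $(a,b)$ to the image of $\ker(Mb)\to {\rm coker}(Ma)$, which makes sense regardless. Restricting to $ba=0$ gives an equivalent category, but you would need to say why (or simply drop the restriction). Second, the paper's two-Yoneda realisation is not incidental: it is what makes the later identification of ${\rm Ab}({\mathcal R})$ with $({\mathcal R}\mbox{-}{\rm mod},{\rm Ab})^{\rm fp}$ and hence with the functor category on the definable category $\langle M\rangle$ immediate, which is the point of Section~2. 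Your Adelman description is more hands-on for verifying abelianness and the embedding, but to connect with the rest of the paper you would still need to exhibit the equivalence with the fp-functor description.
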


It may be that the functor $F$ above is not faithful.  The object kernel of $F$ is a Serre subcategory of ${\rm Ab}({\mathcal R})$, so we may consider the quotient category  ${\mathcal A}(M) =:{\rm Ab}({\mathcal R})/{\rm ker}(F)$ and we get the factorisation
$$\xymatrix{{\rm Ab}({\mathcal R}) \ar@{>>}[r]^{\pi} \ar[d]_F & {\mathcal A}(M) \ar@{.>}[dl]^{F_M}\\  {\mathcal A}}$$
 where now the new induced functor $F_M: {\mathcal A}(M) \to \cA$ is faithful and exact.

\begin{cor} Given a skeletally small preadditive category ${\mathcal R}$ and an additive functor $M:{\mathcal R} \rightarrow {\mathcal A}$, there is a small abelian category ${\mathcal A}(M)$ and a commutative diagram as shown, where $F_M$ is exact and is faithful on objects and morphisms.
\end{cor}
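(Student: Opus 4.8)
The plan is to combine Freyd's embedding theorem (Theorem~\ref{Freyd}) with Gabriel's localization of an abelian category at a Serre subcategory; this is exactly the construction sketched in the two displays immediately preceding the Corollary, so what remains is to check that all the asserted properties genuinely hold. Thus I would begin by applying Theorem~\ref{Freyd} to the given additive functor $M:\mathcal{R}\to\mathcal{A}$, obtaining the skeletally small abelian category $\mathrm{Ab}(\mathcal{R})$ (its objects arising from a set of generators by finitely many kernels and cokernels), the full faithful embedding $\mathcal{R}\hookrightarrow\mathrm{Ab}(\mathcal{R})$, and the exact extension $F:\mathrm{Ab}(\mathcal{R})\to\mathcal{A}$ with $F|_{\mathcal{R}}\simeq M$.

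Next I would verify that $\ker F=\{X\in\mathrm{Ab}(\mathcal{R}): F(X)=0\}$ is a Serre subcategory: given a short exact sequence $0\to X'\to X\to X''\to 0$ in $\mathrm{Ab}(\mathcal{R})$, exactness of $F$ gives $0\to F(X')\to F(X)\to F(X'')\to 0$ exact, so $F(X)=0$ if and only if $F(X')=0=F(X'')$, which is precisely closure of $\ker F$ under subobjects, quotients and extensions. Then I form the Gabriel quotient $\mathcal{A}(M):=\mathrm{Ab}(\mathcal{R})/\ker F$ with its exact quotient functor $\pi:\mathrm{Ab}(\mathcal{R})\onto\mathcal{A}(M)$; this is again abelian, it is skeletally small because the localization does not change the object class, and it is locally small because a skeletally small abelian category is well-powered, so the filtered colimits of $\mathrm{Hom}$-sets defining morphisms in the quotient are colimits of sets. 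By the universal property of the Serre quotient, $F$ (which annihilates $\ker F$) factors uniquely as $F=F_M\circ\pi$ with $F_M:\mathcal{A}(M)\to\mathcal{A}$ exact. Finally, $F_M$ reflects zero objects: if $F_M(X)=0$ then, regarding $X$ as an object of $\mathrm{Ab}(\mathcal{R})$, we get $F(X)=F_M\pi(X)=0$, hence $X\in\ker F$ and so $X\cong 0$ in $\mathcal{A}(M)$; and an exact functor between abelian categories that reflects zero objects is automatically faithful on morphisms, since $F_M(f)=0$ for $f\colon X\to Y$ forces $F_M(\mathrm{im}\,f)=\mathrm{im}(F_M f)=0$, whence $\mathrm{im}\,f=0$ and $f=0$. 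Commutativity of the diagram is then just $F_M\circ\pi=F$ together with $F|_{\mathcal{R}}\simeq M$.

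I do not expect a serious obstacle here: the substantive input, the existence and universality of $\mathrm{Ab}(\mathcal{R})$, is Freyd's theorem, which we are granted. The only points needing genuine care are the standard-but-not-quite-automatic facts that $\mathrm{Ab}(\mathcal{R})/\ker F$ is still abelian and that it remains small — i.e.\ the well-poweredness and object-class bookkeeping above — and the verification that the induced $F_M$ inherits exactness from $F$ via the quotient's universal property rather than having to be checked by hand.
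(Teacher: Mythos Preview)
Your proposal is correct and follows exactly the paper's approach: the paper states this corollary without a separate proof, treating it as immediate from the construction in the preceding paragraph (Freyd's theorem gives $F$, then $\mathcal{A}(M)=\mathrm{Ab}(\mathcal{R})/\ker F$ via Serre quotient yields the faithful exact $F_M$). You have simply filled in the standard verifications (that $\ker F$ is Serre, that the quotient is small, and that an exact functor reflecting zero objects is faithful) which the paper leaves implicit.
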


A universal property of ${\mathcal A}(M)$, generalising that of ${\rm Ab}({\mathcal R})$ in Theorem~\ref{Freyd}, will be given in Section \ref{secdef}.

\subsection{Proof of the Theorem}
Now apply Freyd's general framework to the preadditive category ${\mathcal R} = \Z D$ and denote the category ${\rm Ab}(\Z D)$ by ${\rm Ab}(D)$ for short.  We follow the construction and proof of Freyd's theorem as given in \cite[Chapter 4]{PMAMS}.

Note that the functor $M$ from ${\mathbb Z}D$ to  ${\mathcal A}$ induced by $T\in {\rm Rep}_{\mathcal A}(D)$  has a unique extension to an additive functor $M^+$ from the additive completion ${\mathbb Z}D^{+}$ of ${\mathbb Z}D$.  The objects of ${\mathbb Z}D^{+}$ are finite sequences of objects of $D$ and the arrows are rectangular matrices of arrows (with appropriate domains and codomains) from ${\mathbb Z}D$.  The functor $M^+$ takes $(d_1, \dots, d_n)$ to $T_{d_1} \oplus \dots \oplus T_{d_n}$ and takes a morphism $(\alpha_{ij})_{ij}$ to $(M\alpha_{ij})_{ij}$.  This may be further extended uniquely to an additive functor $M^{++}$ from the idempotent-splitting (= Karoubian = pseudoabelian) completion ${\mathbb Z}D^{++}$ to ${\mathcal A}$.  The objects of ${\mathbb Z}D^{++}$ are pairs $X:= (d,\alpha)$ with $d$ an object of ${\mathbb Z}D^+$ and $\alpha=\alpha^2$ an idempotent endomorphism of ${\mathbb Z}D^+$, the morphisms from $(d,\alpha)$ to $(e,\beta)$ being the morphisms $\gamma:d\rightarrow e$ with $\beta\gamma\alpha=\gamma$.

\begin{lemma} There is a natural equivalence between Grothendieck abelian categories
$${\rm Rep}_{\rm Ab}(D)\simeq  {\mathbb Z}D\mbox{-}{\rm Mod}=({\mathbb Z}D,{\rm Ab})\simeq ({\mathbb Z}D^+,{\rm Ab})\simeq ({\mathbb Z}D^{++},{\rm Ab})$$
\end{lemma}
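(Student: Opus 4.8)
The plan is to assemble standard facts about additive functor categories and completions. First I would dispose of the easy links in the chain: the equivalence ${\rm Rep}_{\rm Ab}(D)\simeq {\mathbb Z}D\mbox{-}{\rm Mod}$ is just Lemma~\ref{rep} applied with $\cA={\rm Ab}$, and the equality ${\mathbb Z}D\mbox{-}{\rm Mod}=({\mathbb Z}D,{\rm Ab})$ is the very definition of a left ${\mathbb Z}D$-module as an additive functor out of ${\mathbb Z}\overline{D}={\mathbb Z}D$. So the genuine content is the chain $({\mathbb Z}D,{\rm Ab})\simeq({\mathbb Z}D^+,{\rm Ab})\simeq({\mathbb Z}D^{++},{\rm Ab})$ together with the assertion that all four categories are Grothendieck abelian.

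For the first of these two equivalences I would show that restriction along the inclusion $\iota:{\mathbb Z}D\hookrightarrow{\mathbb Z}D^+$ is an equivalence $\iota^*:({\mathbb Z}D^+,{\rm Ab})\to({\mathbb Z}D,{\rm Ab})$. Essential surjectivity is exactly the extension $M\mapsto M^+$ already exhibited in the text, available because ${\rm Ab}$ has finite biproducts: $M^+(d_1,\dots,d_n)=Md_1\oplus\cdots\oplus Md_n$, with a matrix of arrows sent to the corresponding matrix of maps. For fullness and faithfulness I would use that every object of ${\mathbb Z}D^+$ is a finite biproduct of objects of ${\mathbb Z}D$, so any additive functor on ${\mathbb Z}D^+$, and any natural transformation between two such, is determined by its restriction to ${\mathbb Z}D$ and conversely extends uniquely, the component at $(d_1,\dots,d_n)$ being the biproduct of the components at the $d_i$. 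The second equivalence is the analogous statement for ${\mathbb Z}D^+\hookrightarrow{\mathbb Z}D^{++}$, using now that ${\rm Ab}$ is idempotent-complete: given $N$ on ${\mathbb Z}D^+$, the idempotent $N\alpha$ splits in ${\rm Ab}$, and choosing splittings functorially (via images, say) produces the extension $N^{++}$, which is the $M^{++}$ of the text when $N=M^+$; since every object of ${\mathbb Z}D^{++}$ is a retract of an object of ${\mathbb Z}D^+$, functors and natural transformations again extend and restrict bijectively. I would also note in passing that ${\mathbb Z}D$, ${\mathbb Z}D^+$ and ${\mathbb Z}D^{++}$ all remain skeletally small, so that each functor category is legitimate under the paper's standing convention.

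Finally I would verify the Grothendieck property for $({\mathbb Z}D,{\rm Ab})$: it is abelian with kernels, cokernels and all (co)limits computed pointwise in ${\rm Ab}$, hence with exact filtered colimits, and $\bigoplus_{d\in D_0}{\mathbb Z}D(d,-)$ is a generator, being a coproduct of the projective representable functors. Since being a Grothendieck abelian category is invariant under equivalence, this transports across all four categories in the statement.

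The hard part — really the only point needing care rather than routine bookkeeping — is the full faithfulness of $\iota^*$ and of its $++$-analogue, i.e. checking that natural transformations extend uniquely along the two completions; this follows from the facts that additive functors preserve finite biproducts and that the image splitting of $N\alpha$ depends functorially on $\alpha$, but it should be written out carefully rather than merely asserted.
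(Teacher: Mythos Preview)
Your proposal is correct and follows the same approach as the paper, which simply cites Lemma~\ref{rep} for the first equivalence and \cite[Chapter~2]{PMAMS} for the remaining ones; you have merely unpacked the standard arguments (restriction along the additive and idempotent completions is an equivalence because ${\rm Ab}$ has biproducts and splits idempotents) that the cited reference contains.
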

\begin{proof} This follows from Lemma \ref{rep} and \cite[Chapter 2]{PMAMS}.
\end{proof}

We tacitly keep using the equivalence ${\mathbb Z}D\mbox{-}{\rm Mod}\simeq {\mathbb Z}D^{++}\mbox{-}{\rm Mod}:= ({\mathbb Z}D^{++},{\rm Ab})$ in what follows. Let ${\mathbb Z}D\mbox{-}{\rm mod}$ denote the category $({\mathbb Z}D,{\rm Ab})^{\rm fp}$ of {\it finitely presented} left ${\mathbb Z}D$-modules.  Since the Yoneda embedding $X \mapsto (X,-)$ from ${\mathbb Z}D^{++}$ to $\big({\mathbb Z}D^{++}\mbox{-}{\rm Mod} \big)^{\rm op}$ is an anti-equivalence of ${\mathbb Z}D^{++}$ with the category of finitely generated projectives in ${\mathbb Z}D\mbox{-}{\rm Mod}$, and since these are generating, the finitely presented modules are those with a projective presentation as the cokernel of $(Y,-) \xrightarrow{(\gamma,-)} (X,-)$ for some $\gamma:X\rightarrow Y$ in ${\mathbb Z}D^{++}$.

 We extend $M$, equivalently $M^{++}$, to a left exact functor
 $$F':\big({\mathbb Z}D \mbox{-}{\rm mod} \big)^{\rm op} \rightarrow {\mathcal A}$$ that is, to a right exact functor from ${\mathbb Z}D \mbox{-}{\rm mod}$ to $\cA$, as follows (cf.~the proof of Theorem 4.3 in \cite{PMAMS} and the commentary before that).
Let $N\in {\mathbb Z}D \mbox{-}{\rm mod}$.  Take $\gamma :X\rightarrow Y$ in ${\mathbb Z}D^{++}$ inducing, under the Yoneda embedding, a projective presentation $$(Y,-) \xrightarrow{(\gamma,-)} (X,-) \rightarrow N \rightarrow 0$$ of $N$.  Then define 
$$F'(N):={\rm ker}(M^{++}X\xrightarrow{M^{++}\gamma}M^{++}Y)$$  Using projectivity of representable functors, there is an induced action on morphisms and one checks that the action of $F'$ on objects is independent of the chosen projective presentation and that the action on morphisms is well-defined.

We apply again a Yoneda functor $N^{\rm o}\rightarrow (N,-)$ from $\big({\mathbb Z}D \mbox{-}{\rm mod} \big)^{\rm op} $ to $( {\mathbb Z}D \mbox{-}{\rm mod},{\rm Ab})^{\rm fp}$, where we use superscript $^{\rm o}$ to indicate objects and morphisms in the opposite category. Finally $F'$ is extended to a right exact and, one may check (directly or using the equivalence, in Section \ref{secdef}, between $( {\mathbb Z}D \mbox{-}{\rm mod},{\rm Ab})^{\rm fp}$ and the category of pp-pairs), exact, in addition to well-defined, functor $$F: ( {\mathbb Z}D \mbox{-}{\rm mod},{\rm Ab})^{\rm fp}\to {\mathcal A}$$ by sending $\Theta\in ({\mathbb Z}D \mbox{-}{\rm mod},{\rm Ab})^{\rm fp}$ to $$F(\Theta):= {\rm coker}(F'N^{\rm o}\xrightarrow{F'g^{\rm o}} F'P^{\rm o})$$ where $P\xrightarrow{g}N$ in ${\mathbb Z}D^{++}\mbox{-}{\rm mod}$ is such that $(N,-)\xrightarrow{(g,-)} (P,-) \rightarrow \Theta  \rightarrow 0$ is a projective presentation in $( {\mathbb Z}D \mbox{-}{\rm mod},{\rm Ab})^{\rm fp}$.\\

In summary, we obtain the following commutative diagram
$$\xymatrix{D \ar@/_/[rrd]_T \ar[r] \ar@/^2.5pc/[rrr]^{\Delta}&\Z D \ar[r] \ar[rd]_M & ({\mathbb Z}D \mbox{-}{\rm mod})^{\rm op} \ar[r] \ar[d]^{F'} & ({\mathbb Z}D \mbox{-}{\rm mod},{\rm Ab})^{\rm fp}:= {\rm Ab}(D)\ar@/^/[dl]^{F} \\ & & {\mathcal A}}$$
where the functor from $ {\mathbb Z}D$ to ${\rm Ab}(D)$ factors through $({\mathbb Z}D^{++}\mbox{-}{\rm mod})^{\rm op}$ and/or equivalently $({\mathbb Z}D\mbox{-}{\rm mod})^{\rm op} $ and it is the composition of two Yoneda embeddings. Thus, given $M:{\mathbb Z}D \longrightarrow {\mathcal A} $ induced by $T\in {\rm Rep}_{\mathcal A}(D)$ with $ {\mathcal A} $ abelian, one obtains the functor $F$ which is exact, along with its restriction $F'$ which is only left exact.

Now let $\ker (F) :=\{ \Theta\in {\rm Ab}(D): F(\Theta)=0\}$.  This is a Serre subcategory of ${\rm Ab}(D)$ and so, by the universal property of the quotient $$\pi: {\rm Ab}(D)\rightarrow {\mathcal A}(T):= {\mathcal A}(M) = {\rm Ab}(D)/\ker (F)$$ there is an essentially unique, exact, functor $F_T:{\mathcal A}(T) \rightarrow {\mathcal A}$ with $F_T\pi =F$ as claimed. Clearly, the induced representation $\tilde T =: \pi \Delta$ from $D$ to  $\cA (T)$ is such that
$F_T\tilde T= F_T\pi \Delta = F \Delta = T$.

Furthermore, if we have $S\in {\rm Rep}_{\mathcal B}(D)$ with $\cB$ abelian and $G:\cB\to \cA$ faithful exact such that $GS =T$ then let $H : {\rm Ab}(D)\to \cB$ be the exact functor induced by Freyd's theorem such that $H\Delta = S$. So $G H\Delta = T$ and by uniqueness of $F$ we have a natural equivalence $G H \cong F$. This implies that ${\mathcal A}(T) = {\mathcal A}(S)$ since $\ker (F) = \ker (H)$, $G$ being faithful exact. We then have $\tilde T= \tilde S$ and get $F_S : {\mathcal A}(T)\to \cB$ faithful exact such that $G F_S$ is naturally equivalent to $F_T$. The Theorem is then clear.

\subsection{Nori's category and the $R$-linear case}\label{secRlin}
Note that for $R$ a commutative unitary ring we now can get an $R$-linear structure $RD$ in the same way as we did for $\Z D$ by considering the \emph{$R$-path algebra}. This is the small preadditive $R$-linear category $RD$ given by the path category $\overline{D}$. We here define $RD$ to have the same objects as $\overline{D}$ and to have, for $R$-module of morphisms from $c$ to $d$, the free $R$-module on $\overline{D}(c,d)$, with composition being defined as usual.

Given this, and an additive $R$-linear category ${\mathcal A}$, the category ${\rm Rep}_{\mathcal A}(D)$ is then naturally equivalent to the category $(RD , \cA)$ of additive $R$-linear functors from $RD$ to $\cA$. Now we have natural equivalences $$(RD,R\mbox{-}{\rm Mod})\simeq (RD^+, R\mbox{-}{\rm Mod})\simeq (RD^{++},R\mbox{-}{\rm Mod})$$
as above. Consider the category $(RD,R\mbox{-}{\rm Mod})^{\rm fp}$ of finitely presented $RD$-modules and set
$${\rm Ab}_R(D):= ((RD,R\mbox{-}{\rm Mod})^{\rm fp}, R\mbox{-}{\rm Mod})^{\rm fp}$$
Thus given $M:RD \longrightarrow {\mathcal A} $ induced by $T\in {\rm Rep}_{\mathcal A}(D)$ with $ {\mathcal A}$ abelian and $R$-linear, one obtains, as above, the functor $F: {\rm Ab}_R(D) \to \cA$ which is exact and $R$-linear. We then set
$$\cA (T):= {\rm Ab}_R(D)/\ker F$$ so that $F_T: \cA (T)\to \cA$ is also faithful and  $R$-linear. As a particular case, we get Nori's theorem as a corollary of our Theorem:

\begin{cor} For a Noetherian ring $R$ and $\cA = R\mbox{-}{\rm mod}$ we get $\cC (T) \simeq \cA (T)$.
\end{cor}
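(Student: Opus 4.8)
The plan is to show that our category $\cA(T)$, constructed as ${\rm Ab}_R(D)/\ker F$, satisfies exactly the same universal property that characterizes Nori's category $\cC(T)$, and then invoke uniqueness (up to equivalence) of a terminal object in a category of such factorizations. Concretely, from the $R$-linear variant already established, we have the $R$-linear faithful exact functor $F_T : \cA(T) \to R\mbox{-}{\rm mod}$ and the representation $\tilde T : D \to \cA(T)$ with $F_T\tilde T = T$; we must verify that this is universal among all $R$-linear faithful exact functors $G : \cB \to R\mbox{-}{\rm mod}$ from $R$-linear abelian $\cB$ equipped with $S : D \to \cB$ such that $GS = T$. The universal property for our $\cA(T)$ is established in essentially the same way as in the non-linear Theorem proved above: given such $(\cB, G, S)$, Freyd's theorem (in its $R$-linear form) yields an $R$-linear exact $H : {\rm Ab}_R(D) \to \cB$ with $H\Delta = S$, hence $GH\Delta = T = F\Delta$, so $GH \cong F$ by the uniqueness in Freyd's theorem; since $G$ is faithful exact, $\ker(GH) = \ker(H) = \ker(F)$, so $H$ factors through an $R$-linear faithful exact $F_S : \cA(T) \to \cB$ with $GF_S \cong F_T$ and $F_S\tilde T = \tilde S = \tilde T$, as required.

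Next I would observe that $\cC(T)$, by Nori's Theorem as recalled in the excerpt (with the universal property spelled out in the paragraph after it), is precisely an abelian $R$-linear category carrying an $R$-linear faithful exact functor to $R\mbox{-}{\rm mod}$ and a compatible representation of $D$, and is terminal with this property. Since terminal objects are unique up to unique isomorphism, and since we have just exhibited $\cA(T)$ with $F_T$ and $\tilde T$ as another such terminal object, there is an equivalence $\cC(T) \simeq \cA(T)$ commuting with the structure functors to $R\mbox{-}{\rm mod}$ and with the representations from $D$. This is where the Noetherian hypothesis on $R$ and the restriction $\cA = R\mbox{-}{\rm mod}$ enter: Nori's construction, and the statement that $\cC(T)$ lands in finitely generated modules, requires $R$ Noetherian (so that kernels of maps between finitely generated modules are finitely generated, keeping everything inside $R\mbox{-}{\rm mod}$); correspondingly one checks that $F : {\rm Ab}_R(D) \to R\mbox{-}{\rm mod}$ does land in finitely generated modules, using that the representable $RD$-modules go to finitely generated $R$-modules and that $R\mbox{-}{\rm mod}$ is closed under kernels and cokernels of its own maps.

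The step I expect to be the main obstacle is the precise matching of universal properties, in particular confirming that Nori's $\cC(T)$ really is characterized by the \emph{same} 2-categorical universal property — terminality in the category whose objects are triples $(\cB, G : \cB \to R\mbox{-}{\rm mod}, S : D \to \cB)$ with $\cB$ abelian $R$-linear, $G$ faithful exact $R$-linear, $GS = T$, and whose morphisms are faithful exact $R$-linear functors compatible with the $G$'s and $S$'s — rather than some a priori weaker or stronger variant. One must check that the uniqueness clauses (``unique up to unique isomorphism'' in Nori's statement versus ``unique up to natural equivalence'' in ours) are compatible, i.e. that the comparison functors in both directions compose to something naturally isomorphic to the identity; this is a formal but slightly delicate 2-categorical argument, and the cleanest route is to note that both constructions are initial/terminal objects of the same (2-)category of factorizations, so a standard uniqueness-of-universal-objects argument gives the equivalence $\cC(T) \simeq \cA(T)$ together with the compatibilities $F_T^{\cA} \simeq F_T^{\cC}$ and $\tilde T^{\cA} \simeq \tilde T^{\cC}$ under this equivalence.
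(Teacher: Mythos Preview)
Your approach is correct and is essentially what the paper does: the corollary is stated without separate proof, as an immediate consequence of the $R$-linear variant of the Theorem just established, since $\cA(T)$ then satisfies the same universal property that characterizes Nori's $\cC(T)$. Your write-up is more explicit than the paper's, which simply says ``As a particular case, we get Nori's theorem as a corollary of our Theorem.''

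One minor terminological slip: you repeatedly call $\cC(T)$ (and $\cA(T)$) \emph{terminal} in the category of factorizations, but in the natural convention---where a morphism of factorizations $(\cB_1,G_1,S_1)\to(\cB_2,G_2,S_2)$ is a faithful exact $R$-linear functor $\cB_1\to\cB_2$ compatible with the $G$'s and $S$'s---the universal property ``there exists $F_S:\cC(T)\to\cB$'' makes $\cC(T)$ \emph{initial}, not terminal. This does not affect the argument, since uniqueness of initial objects works the same way; but you should fix the wording. Your caution about the 2-categorical matching of ``unique up to unique isomorphism'' versus ``unique up to natural equivalence'' is appropriate, though in practice the standard argument (apply each universal property to the other, then use uniqueness to identify the composites with identities up to isomorphism) goes through without difficulty.
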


Let us also note that when the preadditive category ${\mathcal R}$ has an $R$-linear structure then so does ${\rm Ab}({\mathcal R})$ and hence ${\rm Ab}_R({\mathcal R})$ is naturally equivalent (as an $R$-linear category) to ${\rm Ab}({\mathcal R})$ equipped with that $R$-linear structure.  To see the $R$-linear structure on ${\rm Ab}({\mathcal R})$ directly we can follow its construction.  A finitely presented ${\mathcal R}$-module is the cokernel of a morphism $(Y,-) \xrightarrow{(\gamma,-)} (X,-)$ with $\gamma:X \rightarrow Y$ in ${\mathcal R}$.  The $R$-linear structure of ${\mathcal R}$ means that $R$ acts as endomorphisms of the identity functor of ${\mathcal R}$, hence any additive functor from ${\mathcal R}$ to ${\rm Ab}$ factors through $R\mbox{-}{\rm Mod}$.  So there are induced $R$-module structures on $(Y,-)$ and $(X,-)$, and $(\gamma,-)$ will be a morphism of $R$-modules, hence the cokernel also carries an $R$-module structure.  Similarly for the second stage of the construction.  We deduce that every object of ${\rm Ab}({\mathcal R})$ is an $R$-module and all morphisms of ${\rm Ab}({\mathcal R})$ are $R$-linear.

\section{Theoretical motives via definable categories}

In the additive context the link between regular theories and their categories of models can be expressed in terms of Serre subcategories and corresponding localisations of the free abelian category.  This uses the equivalent views of these abelian categories as, on the one hand categories of pp-sorts and, on the other, as localisations of the category of functors on finitely presented models.

\subsection{Definable additive categories}\label{secdef}

If ${\mathcal R}$ is a skeletally small preadditive category then there is an equivalence (see e.g.~\cite[10.2.30]{PP}) between ${\rm Ab}({\mathcal R})$ as constructed above and the category of pp-pairs, equivalently regular sequents, for the theory of ${\mathcal R}$-modules.  As a sequent, such a pair can be written $\vdash_{\overline{x}} \phi \rightarrow \psi$ where, without of loss of generality, it can be assumed that $\vdash_{\overline{x}} \psi \rightarrow \phi$ already is valid in every ${\mathcal R}$-module.  Here $\phi$ and $\psi$ are pp (for ``positive primitive") formulas in the same free variables, also referred to as regular formulas.  As a pp-pair, this is written $\phi/\psi$ (or $\phi(\overline{x})/\psi(\overline{x})$ in order to show the free variables).

This pp-pair notation already refers to the fact that every pp-pair $\phi/\psi$, being an object of ${\rm Ab}({\mathcal R})$, can be regarded as a functor, $F_{\phi/\psi}$, from ${\mathcal R}\mbox{-}{\rm mod}$ to ${\rm Ab}$, indeed from ${\mathcal R}\mbox{-}{\rm Mod}$ to ${\rm Ab}$.  Namely it is that which takes $M \in {\mathcal R}\mbox{-}{\rm Mod}$ to $\phi(M)/\psi(M)$, where $\phi(M)$ denotes the solution set of $\phi$ in $M$.  When ${\mathcal R}$ is a ring with one object $\phi(M)$ is a subgroup of $M^n$, $n$ being the number of free variables in $\phi$; in general it is a subgroup of the product of the sorts of $M$ corresponding to the variables $\overline{x}$.  Solution sets to pp formulas are preserved by morphisms, so there is an induced action giving the value of $F_{\phi/\psi}$ on morphisms.  The phrase $\phi/\psi$ is {\it closed} on $M$ is used to mean that the sequent $\vdash_{\overline{x}} \phi \rightarrow \psi$ is valid in $M$, since its being valid is equivalent to the group $\phi(M)/\psi(M)$ being trivial.

These pp-pairs are the objects of the category of pp-pairs, denoted $_{\mathcal R}{\mathbb L}^{\rm eq+}$ and equivalent to ${\rm Ab}({\mathcal R})$; the arrows are the pp-defined maps between pp-pairs - that is, the pp-defined relations between such pairs which are functional.  Almost by definition, this category is naturally equivalent to the effectivisation of the regular syntactic category for the theory of ${\mathcal R}$-modules.  Note that the formal language, and hence the meaning of pp=regular formula and the syntactic category, does depend on the choice of language.  For instance the language based on ${\mathbb Z}D$ will have fewer sorts than that based on ${\mathbb Z}D^{++}$ but they are equivalent in that any (pp) formula of the larger language is equivalent in every ${\mathbb Z}D$-module to a (pp) formula in the smaller language, at least if one allows the addition of new sorts.  In model theory this addition of definable new sorts is known as the imaginaries, or $^{\rm eq}$ construction (the notation $^{\rm eq+}$ indicates the construction restricted to pp formulas).  New sorts are formed by introducing finite products of existing sorts and factoring by definable equivalence relations -- the process referred to as effectivisation in the more category-theoretic model theory literature.  The various initial choices for language/syntactic category do all lead to the same abelian category of pp-sorts which may, therefore, be regarded as the category underlying the richest, and canonical, language for ${\mathcal R}$-modules.

Given an ${\mathcal R}$-module $M$, the quotient category ${\mathcal A}(M)$ similarly may be regarded as the category of sorts and function symbols for the canonical language for $M$ (and for the modules in the definable category that $M$ generates, see below).  This gives us an interpretation of the exact functor which we have denoted $F$ (or $F_M$) as the unique extension, $M^{\rm eq+}$, of $M$ to a structure for the canonical language:  it assigns to each sort $\phi/\psi$ its value, $\phi(M)/\psi(M)$, on $M$ and to each arrow from $\phi(\overline{x})/\psi(\overline{x})$ to $\phi'(\overline{y})/\psi'(\overline{y})$ the corresponding additive function from $\phi(M)/\psi(M)$ to $\phi'(M)/\psi'(M)$ (where this function is definable by some pp formula $\theta(\overline{x},\overline{y})$).  For more discussion of these languages see \cite{PCRM}.

Next recall that if ${\mathcal S}$ is a Serre subcategory of an abelian category ${\mathcal A}$ then the standard construction of the localised category ${\mathcal A}/{\mathcal S}$ changes the morphisms but not the objects.  Therefore, in the case that $M$ is an ${\mathcal R}$-module and the Serre subcategory is the kernel, ${\mathcal S}_M$ of $\pi:{\rm Ab}({\mathcal R}) \rightarrow {\mathcal A}(M)$, the objects of the localised category ${\mathcal A}(M)$ are the pp-pairs as before.  The morphisms are the pp-defined relations between sorts which are provably functional in the regular theory of $M$, equivalently which are functional when evaluated on $M$, equivalently which are functional on every object in the definable category generated by $M$ (we define this next).  Note that the pp-pairs which are closed on $M$ are exactly those which are objects in ${\mathcal S}_M$.  Furthermore, a set of pairs/regular sequents which generates ${\mathcal S}_M$ as a Serre subcategory is exactly a set of regular axioms which generates the regular theory, $\T_M$, of $M$.

For $M$ as above, the {\it definable category} $\langle M \rangle$ {\it generated by} $M$ is the full subcategory of ${\mathcal R}\mbox{-}{\rm Mod}$ on those objects which are models of the regular theory of the ${\mathcal R}$-module $M$.  These are the modules which satisfy all the sequents $\vdash \phi \rightarrow \psi$ in $\T_M$, equivalently those modules on which each pp-pair $\phi/\psi$ in ${\mathcal S}_M$ is closed.  This category has an algebraic characterisation, see \cite[3.4.7]{PP}, as the smallest full subcategory of ${\mathcal R}\mbox{-}{\rm Mod}$ containing $M$ which is closed under direct products, direct limits and pure submodules.  It is also equivalent to the category of exact functors from ${\mathcal A}(M)$ to ${\rm Ab}$, i.e.
$$\langle M \rangle\simeq {\rm Ex}({\mathcal A}(M), {\rm Ab})$$
(this follows from the universal property of the abelian categories involved).

As described above, the equivalence of the category ${\mathcal A}(M)$ with the category of pp-pairs and pp-definable maps for the regular theory of $M$ allows us to view ${\mathcal A}(M)$ as a category of functors from $\langle M \rangle$ to ${\rm Ab}$.  In fact, \cite[12.10]{PMAMS}, it is the category of those additive functors from $ \langle M \rangle$ to ${\rm Ab}$ which commute with direct products and direct limits.

Each object $M'$ in $ \langle M \rangle$ has regular theory containing that of $M$; these will be equal iff $ \langle M' \rangle = \langle M \rangle$, that is, in the regular theories terminology, if and only if $M'$ is a conservative model of the regular theory of $M$.  Otherwise $M'$ satisfies more regular sequents than $M$, so ${\mathcal S}_{M'} \supsetneq {\mathcal S}_M$.  In that case, $ \langle M' \rangle$ will be a proper definable subcategory of $ \langle M \rangle$ and, on the functor category side, ${\mathcal A}(M')$ will be a proper Serre quotient of ${\mathcal A}(M)$.  A Serre-generating subset of the kernel, ${\mathcal S}_{M'}/{\mathcal S}_M$, of that quotient map is a set of regular axioms which must be added to the regular theory of $M$ in order to logically generate the regular theory of $M'$.  We remark that every proper definable subcategory of $ \langle M \rangle$ has the form $ \langle M' \rangle$ for some $M' \in  \langle M \rangle$ (\cite[3.4.12]{PP}).  We may express this relation between $\langle M \rangle$ and ${\mathcal A}(M)$ by the following universal property.

\begin{theorem}\label{defcatuniv} Suppose that ${\mathcal R}$ is an $R$-linear preadditive category, where $R$ is a commutative unital ring.  Let $M:{\mathcal R} \rightarrow R\mbox{-}{\rm Mod}$ be an $R$-linear representation.  Let $M'\in \langle M \rangle$, that is, suppose that $M'$ is a model of the regular theory of $M$.  Then there is a unique, to natural equivalence, exact $R$-linear functor $G_{M'}:{\mathcal A}(M) \rightarrow R\mbox{-}{\rm Mod}$ such that the following diagram 
$$\xymatrix{{\mathcal R} \ar[r] \ar[d]_{M'} & {\mathcal A}(M) \ar@{.>}[dl]^{G_{M'}} \\  R\mbox{-}{\rm Mod}}$$
commutes. The functor $G_{M'}$ will be faithful precisely if $\langle M' \rangle = \langle M \rangle$, that is, if and only if $M$ and $M'$ have the same regular theory.
\end{theorem}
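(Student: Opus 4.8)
The plan is to derive the whole statement from Freyd's Theorem~\ref{Freyd}, in the $R$-linear form recalled at the end of Section~\ref{secRlin}, together with the dictionary set up above: the identification of ${\rm Ab}(\cR)$ with the category of pp-pairs for the theory of $\cR$-modules, under which $\ker F=\cS_M$ is the Serre subcategory of pp-pairs that are closed on $M$, and the fact that the exact extension of an $\cR$-module evaluates a pp-pair $\phi/\psi$ at that module.

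For existence I would first apply the $R$-linear form of Theorem~\ref{Freyd} to $M':\cR\to R\mbox{-}{\rm Mod}$, obtaining an exact $R$-linear functor $H:{\rm Ab}(\cR)\to R\mbox{-}{\rm Mod}$, unique up to natural equivalence, which extends $M'$ along $\cR\to{\rm Ab}(\cR)$. In pp-pair terms $H$ sends $\phi/\psi$ to $\phi(M')/\psi(M')$, so $\ker H=\cS_{M'}$ is the Serre subcategory of pp-pairs closed on $M'$. The hypothesis enters here: $M'\in\langle M\rangle$ means precisely that $M'$ satisfies every regular sequent of $\T_M$, i.e.\ that every pp-pair of $\cS_M$ is closed on $M'$, so $\ker F=\cS_M\subseteq\cS_{M'}=\ker H$. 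By the universal property of the Serre quotient $\pi:{\rm Ab}(\cR)\onto{\rm Ab}(\cR)/\ker F=\cA(M)$, $H$ factors as $H=G_{M'}\circ\pi$ with $G_{M'}:\cA(M)\to R\mbox{-}{\rm Mod}$ exact; it is $R$-linear by the reasoning closing Section~\ref{secRlin} (every object of $\cA(M)$ is canonically an $R$-module and every morphism is $R$-linear). Since $\cR\to{\rm Ab}(\cR)\xrightarrow{\pi}\cA(M)$ is the functor in the statement and $G_{M'}$ composed with it recovers $H$ restricted to $\cR$, namely $M'$, the required triangle commutes. For uniqueness, any other exact $R$-linear $G'$ making the triangle commute gives $G'\circ\pi$ exact and extending $M'$, hence naturally equivalent to $H=G_{M'}\circ\pi$ by the uniqueness in Theorem~\ref{Freyd}; since $\pi$ is a Serre localisation, precomposition with $\pi$ is fully faithful on the exact functors killing $\ker F$, so this equivalence descends to $G'\simeq G_{M'}$.

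For the faithfulness criterion I would use that an exact functor between abelian categories is faithful if and only if it annihilates no nonzero object. Since $G_{M'}(\Theta)=H(\Theta)$ on objects (a Serre quotient changes only the morphisms), the object-kernel of $G_{M'}$ in $\cA(M)$ is $\cS_{M'}$ read modulo $\cS_M$, which is the zero subcategory exactly when $\cS_{M'}=\cS_M$, i.e.\ exactly when $M$ and $M'$ have the same closed pp-pairs, i.e.\ when $\T_{M'}=\T_M$. Finally $\T_{M'}=\T_M$ is equivalent to $\langle M'\rangle=\langle M\rangle$: the inclusion $\T_M\subseteq\T_{M'}$ always holds since $M'\in\langle M\rangle$, so equal theories give equal classes of models, while a sequent in $\T_{M'}\setminus\T_M$ would witness $M\notin\langle M'\rangle$ and hence $\langle M'\rangle\subsetneq\langle M\rangle$.

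The only genuinely non-formal ingredients are the translation of the condition $M'\in\langle M\rangle$ into the Serre-subcategory inclusion $\cS_M\subseteq\ker H$ and, for the second statement, of the triviality of $\ker G_{M'}$ into the coincidence $\langle M'\rangle=\langle M\rangle$; both are precisely what the discussion of definable additive categories above supplies. I therefore expect no real obstacle: once these identifications are in place, the factorisation through $\pi$, the $R$-linearity and the uniqueness are routine bookkeeping with Freyd's theorem and the universal property of Serre quotients, and the one point to handle with care is that the hypothesis $M'\in\langle M\rangle$ is used exactly where the inclusion of kernels is needed.
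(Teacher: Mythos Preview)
Your proof is correct and follows essentially the same route as the paper: apply Freyd's theorem to $M'$ to obtain $H:{\rm Ab}(\cR)\to R\text{-}{\rm Mod}$, use $M'\in\langle M\rangle$ to get $\cS_M\subseteq\cS_{M'}=\ker H$, factor through the Serre quotient $\pi$, and obtain uniqueness by pulling back along $\pi$ and invoking Freyd again. The only cosmetic difference is that the paper packages the existence step as the composite $F_{M'}\circ\pi_0$ of the intermediate localisation $\pi_0:\cA(M)\to\cA(M')$ with the canonical $F_{M'}$, whereas you factor $H$ directly through $\pi$; these are the same functor by uniqueness of factorisation through a Serre quotient.
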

\begin{proof} Recall from Section \ref{secRlin} that ${\rm Ab}({\mathcal R})$ and hence $\cA(M)$ carries an $R$-linear structure.

For existence, we take the composition of the canonical localisation $\pi_0:{\mathcal A}(M) \rightarrow {\mathcal A}(M')$ (which exists since, by hypothesis, ${\mathcal S}_M \subseteq {\mathcal S}_{M'}$) with the functor $F_{M'}$ from the diagram after \ref{Freyd}.

Suppose that the exact functor $G:{\mathcal A}(M) \rightarrow R\mbox{-}{\rm Mod}$ also makes the diagram commute.  Precomposing with the localisation $\pi:{\rm Ab}({\mathcal R}) \rightarrow {\mathcal A}(M)$, we deduce from \ref{Freyd} that $G\pi$ and $G_{M'}\pi$ are naturally equivalent and hence, by the universal property of the localisation $\pi$, that $G$ is naturally equivalent to $G_{M'}$.

Since $G_{M'}$ factors through the localisation ${\mathcal A}(M) \rightarrow {\mathcal A}(M')$ the last statement is direct from the discussion above, describing the kernel of this localisation in terms of regular theories.
\end{proof}

It can be useful to have a statement like \ref{defcatuniv} but starting with an $R$-linear representation $M:{\mathcal R} \rightarrow {\mathcal G}$ where ${\mathcal G}$ is any Grothendieck abelian $R$-linear category.  In fact, the proof above works just as well in that case.  Here is the diagram (which also illustrates the proof of \ref{defcatuniv}).
$$\xymatrix{& {\mathcal R} \ar[r]^\Delta \ar[dd]^<<<<<{M'} \ar[dl]_M & {\rm Ab}({\mathcal R}) \ar[dr]^\pi \ar[dd]^<<<<<{\pi'}\\
{\mathcal G} \ar@{=}[dr] & & & \cA (M) \ar[lll]_<<<<{F_M} \ar@{.>}[dl]^{\pi_0} \ar@{.>}[dll]_>>>>>>>>{G} \\
& {\mathcal G} &  \cA (M') \ar[l]^{F_{M'}}}$$

\subsection{$\T$-motives}
Consider now a category $\cC$ and a distinguished subcategory $\cM$ of $\cC$. Let $\cC^{\square}$ be the \emph{category of pairs} with objects the arrows in $\cM$ and morphisms the commutative squares of $\cC$. We shall denote $(X,Y)$ an object of $\cC^{\square}$, i.e.  a morphism $f: Y \to X$ of $\cM$, and $\square: (X,Y) \to (X', Y')$ a commutative square.

Following Nori, we can define a quiver $D$, which we can call the {\it Nori diagram of}\, $\cC^{\square}$, with vertices the triples $(X, Y, i)$ for each $i\in \Z$ and arrows $(X,Y, i) \to (X', Y', i)$ associated to arrows $\square$ in $\cC^{\square}$ with additional arrows $(X,Y,i)\to (Y,Z, i-1)$ corresponding to $\partial : (Y,Z) \to (X,Y)$ the morphism of $\cC^{\square}$ given by $f : Z \to Y$ and $g: Y \to X$ objects of $\cC^{\square}$.

We let $\T$ be the regular homological theory defined in \cite[\S 2]{BV}. In fact, for such a category $\cC^{\square}$ and/or its Nori diagram $D$ there is a corresponding, slightly richer, signature $\Sigma$ where vertices are sorts and arrows are function symbols. We let $\cA[\T]$ be the abelian category of constructible $\T$-motives as defined in \cite[\S 4]{BV}. This is the effectivization of the regular syntactic category.

 A model $H$ of the regular theory $\T$ in an abelian category $\cA$ yields a representation $H\in {\rm Rep}_{\mathcal A}(D)$. In particular, for $\cA [\T]$ and the universal model $H^{\T}$ (see \cite[4.1.5]{BV}) we get a representation $H^{\T}\in {\rm Rep}_{\cA [\T]}(D)$. We can apply our Theorem to $D$, the Nori diagram of $\cC^{\square}$, and we thus obtain an exact functor $F:  {\rm Ab} (D)\to \cA [\T]$ lifting $H^{\T}$, the abelian category $$\cA (\T):= \cA(H^\T)$$ defined to be the Serre quotient of ${\rm Ab} (D)$ by $\ker (F)$ and the induced  faithful exact functor $F_{\T} : \cA (\T)\to \cA [\T]$ filling in the following commutative diagram

$$\xymatrix{&  {\rm Ab} (D)\ar@{>>}[d]^{\pi} \ar@/^/[dr]^{F} & \\
D \ar@/_1.2pc/[rr]_-{H^{\T}}\ar@/^/[ur]^-{\Delta}\ar[r]^-{\tilde H^{\T}} &  \cA (\T)\ar@{.>}[r]^-{F_{\T}}  & \cA [\T]\\
} $$
\begin{theorem}\label{compare}
The functor $F_{\T} : \cA (\T)\xrightarrow{\simeq} \cA [\T]$ is an equivalence.
\end{theorem}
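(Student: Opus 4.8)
The plan is to produce a quasi-inverse to $F_{\T}$ using the universal property of $\cA [\T]$. By construction (the diagram preceding the statement), $F_{\T}:\cA (\T)\to\cA [\T]$ is faithful and exact, so it is enough to build an exact functor $G:\cA [\T]\to\cA (\T)$ with $F_{\T}G\cong{\rm id}_{\cA [\T]}$ and $GF_{\T}\cong{\rm id}_{\cA (\T)}$. Recall from \cite[\S 4]{BV} that $\cA [\T]$ is the effectivization of the regular syntactic category of $\T$; hence, for every abelian category $\cB$, evaluation at the universal model $H^{\T}$ gives an equivalence between the category of exact functors $\cA [\T]\to\cB$ and the category of models of $\T$ in $\cB$. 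In particular $H^{\T}$ corresponds to ${\rm id}_{\cA [\T]}$, every $\T$-model in $\cB$ is $LH^{\T}$ for an essentially unique exact $L$, and two exact functors $\cA [\T]\to\cB$ agreeing up to isomorphism on $H^{\T}$ are naturally equivalent.

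First I would check that the induced representation $\tilde H^{\T}=\pi\Delta:D\to\cA (\T)$ is a model of $\T$ in $\cA (\T)$. This is where faithfulness of $F_{\T}$ is used: a regular sequent holds at a representation $N:D\to\cB$, $\cB$ abelian, precisely when an inclusion of the subobjects interpreting its two sides holds, and a faithful exact functor both preserves and reflects such inclusions. Spelled out in the pp-pair picture of Section \ref{secdef}, an axiom $\vdash_{\overline x}\phi\to\psi$ of $\T$ corresponds to the pp-pair $\phi/\psi\in{\rm Ab}(D)$, and it holds at $\tilde H^{\T}$ iff $\pi(\phi/\psi)=0$, i.e.\ iff $\phi/\psi\in\ker F$, i.e.\ iff it holds at $H^{\T}=F_{\T}\tilde H^{\T}$ --- which it does, as $H^{\T}$ is a model of $\T$. (Here one uses that, after the $^{\rm eq+}$/imaginaries construction, the signature $\Sigma$ of \cite{BV} and the language of $D$-modules carry the same definable data, so that ``model of $\T$'' and ``representation of $D$ on which the Serre subcategory corresponding to $\T$ is closed'' coincide; cf.\ Lemma \ref{rep} and the discussion of Section \ref{secdef}.) Granting this, the universal property of $\cA [\T]$ produces a unique-up-to-natural-equivalence exact functor $G:\cA [\T]\to\cA (\T)$ with $GH^{\T}\cong\tilde H^{\T}$.

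It then remains to identify the two composites with identities. The functor $F_{\T}G:\cA [\T]\to\cA [\T]$ is exact and sends $H^{\T}$ to $F_{\T}\tilde H^{\T}=H^{\T}$, so it is naturally equivalent to ${\rm id}_{\cA [\T]}$ by the last sentence of the first paragraph; in particular $F_{\T}$ is essentially surjective. For the other composite, precompose $GF_{\T}$ and ${\rm id}_{\cA (\T)}$ with the Serre localisation $\pi:{\rm Ab}(D)\to\cA (\T)$: both $GF_{\T}\pi$ and $\pi$ are exact functors ${\rm Ab}(D)\to\cA (\T)$ whose restrictions along the canonical embedding $\Z D\to{\rm Ab}(D)$ agree, both being the representation $GF_{\T}\tilde H^{\T}=GH^{\T}\cong\tilde H^{\T}=\pi\Delta$ (using Lemma \ref{rep}). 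By the uniqueness clause in Freyd's Theorem \ref{Freyd}, $GF_{\T}\pi\cong\pi$; since $\pi$ kills $\ker F$ and enjoys the universal property of the Serre quotient, this forces $GF_{\T}\cong{\rm id}_{\cA (\T)}$. Hence $F_{\T}$ and $G$ are mutually quasi-inverse, so $F_{\T}$ is an equivalence.

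The step that requires genuine care is the one in the second paragraph: extracting from \cite{BV} the exact form of the universal property of $\cA [\T]$ and of the universal model $H^{\T}$, and reconciling the signature $\Sigma$ used there with the $D$-language underlying ${\rm Ab}(D)$, so that the statement ``$\tilde H^{\T}$ is a $\T$-model in $\cA (\T)$'' is meaningful and a formal consequence of the faithful exactness of $F_{\T}$. Once that is in place the rest is formal, resting only on Freyd's Theorem \ref{Freyd}, the universal property of Serre quotients, and the fact that faithful exact functors reflect validity of regular sequents.
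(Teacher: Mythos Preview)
Your proof is correct and follows essentially the same route as the paper's own argument: both use faithful exactness of $F_{\T}$ to see that $\tilde H^{\T}$ is a $\T$-model, invoke the equivalence $\Tmod(\cA(\T))\cong{\rm Ex}(\cA[\T],\cA(\T))$ to produce $G$, identify $F_{\T}G$ with the identity via the universal property of $\cA[\T]$, and then obtain $GF_{\T}\cong{\rm id}$ by a uniqueness argument. The only difference is cosmetic: where the paper simply appeals to ``the universal property of the Theorem, applied to $\tilde H^{\T}$'' for the last step, you unpack this into Freyd's uniqueness clause for exact extensions out of ${\rm Ab}(D)$ followed by the universal property of the Serre quotient $\pi$---which is precisely what underlies the paper's one-line appeal.
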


\begin{proof} Since $F_{\T}$ is a faithful exact functor it preserves and reflects validity of regular sequents. Since $F_{\T}\tilde H^{\T}= H^{\T}$ we then obtain that $\tilde H^{\T}:= \pi\Delta \in {\rm Rep}_{\cA (\T)}(D)$ is actually a model of $\T$ in the abelian category
$\cA (\T)$. Now recall (see \cite[4.1.3]{BV}) that there is a natural equivalence, where $\Tmod (\cA (\T))$ denotes the category of $\T$-models in $\cA (\T)$,
$$\Tmod (\cA (\T)) \cong {\rm Ex} (\cA [\T], \cA (\T))$$
so that we also obtain an exact functor $G : \cA [\T]\to \cA (\T)$ corresponding to the $\T$-model $\tilde H^{\T}$ above. By naturality, the composition $F_{\T}G$ is the identity since $F_{\T}\tilde H^{\T}= H^{\T}$ and the universal model corresponds to the identity. On the other hand, by construction of the equivalence between $\T$-models and exact functors, we have that $GH^{\T}\cong \tilde H^{\T}$. Therefore, by uniqueness, using the universal property of the Theorem, applied to $\tilde H^{\T}\in {\rm Rep}_{\cA (\T)}(D)$, we then get that
$GF_{\T}$ is naturally isomorphic to the identity. \end{proof}

A similar argument works for the theories $\T'$ obtained from $\T$ by adding regular axioms on the same signature. Since the universal model $H^{\T'}$ of $\T'$ in $\cA [\T']$ is also a $\T$-model we get a functor $\cA [\T]\to \cA [\T']$ sending $H^{\T}$ to $H^{\T'}$. We thus easily obtain from Theorem \ref{compare} the following:
\begin{cor} If $\T'$ is obtained from $\T$ by adding regular axioms on the same signature then $\cA [\T']$ is a Serre quotient of $\cA [\T]$. Furthermore, $\T'$ is a conservative extension of $\T$ if and only if $\cA [\T]\cong \cA [\T']$ is an equivalence.
\end{cor}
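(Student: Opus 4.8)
The plan is to reduce both assertions to a comparison of Serre kernels inside ${\rm Ab}(D)$, replacing $\cA[\T]$ and $\cA[\T']$ by the quotients $\cA(\T)={\rm Ab}(D)/\ker(F)$ and $\cA(\T')={\rm Ab}(D)/\ker(F')$ via Theorem~\ref{compare} (for $\T$) and its analogue for $\T'$; here $F:{\rm Ab}(D)\to\cA[\T]$ and $F':{\rm Ab}(D)\to\cA[\T']$ are the exact lifts of $H^{\T}$ and $H^{\T'}$ provided by the main Theorem, so $F\Delta=H^{\T}$ and $F'\Delta=H^{\T'}$. First I would note that, since $\T\subseteq\T'$, the universal model $H^{\T'}$ is in particular a $\T$-model in $\cA[\T']$, so the equivalence $\Tmod(\cA[\T'])\cong{\rm Ex}(\cA[\T],\cA[\T'])$ of \cite[4.1.3]{BV} gives an exact functor $v:\cA[\T]\to\cA[\T']$ with $vH^{\T}\cong H^{\T'}$. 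Then $vF$ and $F'$ are exact functors ${\rm Ab}(D)\to\cA[\T']$ with $vF\Delta=vH^{\T}\cong H^{\T'}=F'\Delta$, so by the uniqueness of exact lifts in the main Theorem $vF\cong F'$, whence $\ker(F)\subseteq\ker(F')$.

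Granting this inclusion, the localisation $\pi':{\rm Ab}(D)\to\cA(\T')$ factors as $\pi'\cong q\pi$ through $\pi:{\rm Ab}(D)\to\cA(\T)$ for a unique exact $q:\cA(\T)\to\cA(\T')$, and $q$ is itself a Serre localisation, namely the quotient of $\cA(\T)={\rm Ab}(D)/\ker(F)$ by the Serre subcategory $\ker(F')/\ker(F)$, since $\cA(\T)/(\ker(F')/\ker(F))\simeq{\rm Ab}(D)/\ker(F')=\cA(\T')$. Transporting along the equivalences $F_{\T}:\cA(\T)\xrightarrow{\simeq}\cA[\T]$ and $F_{\T'}:\cA(\T')\xrightarrow{\simeq}\cA[\T']$ of Theorem~\ref{compare} (which satisfy $F_{\T}\pi=F$ and $F_{\T'}\pi'=F'$), and using that an exact functor out of the localisation $\pi$ is determined up to natural equivalence by its composite with $\pi$, one gets $F_{\T'}q\cong vF_{\T}$; that is, $v$ coincides, up to these equivalences, with the Serre quotient functor $q$. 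In particular $\cA[\T']$ is a Serre quotient of $\cA[\T]$, which is the first assertion.

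For the ``furthermore'', $q$ --- equivalently $v$ --- is an equivalence precisely when $\ker(F')/\ker(F)$ is zero, i.e.\ when $\ker(F)=\ker(F')$. Here I would invoke the dictionary recalled before Theorem~\ref{defcatuniv}: $\ker(F)$ is exactly the Serre subcategory of ${\rm Ab}(D)$ consisting of the pp-pairs $\phi(\overline{x})/\psi(\overline{x})$ closed on the universal model $H^{\T}$ and, $H^{\T}$ being the universal model of $\T$, this is equivalent to provability of the regular sequent $\vdash_{\overline{x}}\phi\to\psi$ in $\T$ (see \cite[\S4]{BV}); likewise $\ker(F')$ records the regular sequents provable in $\T'$. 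Since $\T$ and $\T'$ have the same signature and $\T\subseteq\T'$, these two sets of provable regular sequents coincide exactly when $\T'$ derives no new regular sequent, i.e.\ exactly when $\T'$ is a conservative extension of $\T$. Together with the previous paragraph, this proves the equivalence claimed in the ``furthermore''.

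The routine ingredients here are the ``third isomorphism theorem'' for Serre quotients and the uniqueness of exact lifts; the point that needs care --- and the main obstacle --- is the last translation, identifying $\ker(F)$ with the regular sequents provable in $\T$. This rests on $H^{\T}$ being the generic model of $\T$, which in turn uses Theorem~\ref{compare} to see that passing from the richer signature $\Sigma$ of $\T$ down to the sub-signature $D$ loses nothing, together with the realisation of $\cA[\T]$ as the effectivisation of the regular syntactic category of $\T$ in \cite[\S4]{BV}.
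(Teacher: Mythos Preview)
Your proposal is correct and follows essentially the same route as the paper, which merely sketches the argument in one sentence (``a similar argument works\ldots\ we thus easily obtain''): you use that $H^{\T'}$ is a $\T$-model to produce the exact functor $v:\cA[\T]\to\cA[\T']$, invoke Theorem~\ref{compare} and its analogue for $\T'$ to pass to Serre quotients of ${\rm Ab}(D)$, and compare kernels. Your write-up simply makes explicit the details (the inclusion $\ker(F)\subseteq\ker(F')$ via uniqueness of exact lifts, the third-isomorphism step, and the identification of $\ker(F)$ with the regular sequents provable in $\T$ via genericity of $H^{\T}$) that the paper leaves to the reader.
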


Note that this is also the case for the regular theory $\T_H$ of a model $H\in \Tmod (\cA)$ obtained by adding all regular axioms which are valid in the model $H$. As we regard a model as a representation we also get a corresponding commutative diagram

$$\xymatrix{&  {\rm Ab} (D)\ar@{>>}[d]^{\pi} \ar@/^/[dr]^{}\ar@/^/[rd]^{}\ar@/^1.2pc/[rrd]^{}\ar@/^1.5pc/[rrrd]^{F}  & \\
D \ar@/_1.8pc/[rrrr]_-{H}\ar@/^/[ur]^-{\Delta}\ar[r]^-{\tilde H} &  \cA (H)\ar@{.>}[r]^-{} & \cA (\T_H)\ar@{.>}[r]^-{}  & \cA [\T_H]\ar@{.>}[r]^-{r_H} &\cA\\
} $$
where the functor $F$ is induced by  $H\in {\rm Rep}_{\mathcal A}(D)$ and the exact functor $r_H: \cA [\T_H]\to \cA$ corresponding to $H\in \Tmod (\cA)$ is faithful as, clearly, $H$ is a conservative $\T_H$-model.  Arguing as above, $\cA (\T_H) \rightarrow \cA [\T_H]$ also is an equivalence. Since $r_H$ is faithful, $\cA(H)$ and $\cA [\T_H]$ are quotients of ${\rm Ab} (D)$ by the same Serre subcategory, hence are equivalent, and,  so finally:
\begin{cor}
The functors $\cA(H)\xrightarrow{\simeq} \cA (\T_H)\xrightarrow{\simeq} \cA [\T_H]$ are equivalences and the composition with $r_H$ coincides with the canonical faithful exact functor $F_H$ attached to $H\in {\rm Rep}_{\mathcal A}(D)$.
\end{cor}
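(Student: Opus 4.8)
The plan is to present all three categories as Serre quotients of ${\rm Ab}(D)$ and then match them up using only uniqueness statements, with essentially no computation.

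First I would observe that the comparison functor $\cA(\T_H)\to\cA[\T_H]$ is an equivalence: this is Theorem~\ref{compare} with $\T_H$ in place of $\T$, and the proof given there applies verbatim, since $\T_H$ is a regular homological theory on the same signature, using the analogues of \cite[4.1.3, 4.1.5]{BV} for $\T_H$. Secondly, since $\T_H$ consists by definition of all regular sequents valid in $H$, the model $H$ is a conservative $\T_H$-model, so by the last clause of the $\cA[\T_H]$-version of Theorem~\ref{defcatuniv} the exact functor $r_H:\cA[\T_H]\to\cA$ corresponding to $H$ under $\Tmod(\cA)\cong{\rm Ex}(\cA[\T_H],\cA)$ is faithful.

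Next I would identify $\cA(H)$ with $\cA(\T_H)$. Write $\Phi:{\rm Ab}(D)\to\cA[\T_H]$ for the exact functor lifting the universal model $H^{\T_H}\in{\rm Rep}_{\cA[\T_H]}(D)$, so that $\cA(\T_H)={\rm Ab}(D)/\ker\Phi$ and $F_{\T_H}\pi'=\Phi$ for the quotient $\pi':{\rm Ab}(D)\to\cA(\T_H)$ and the induced faithful exact $F_{\T_H}$; and write $F:{\rm Ab}(D)\to\cA$ for the exact functor lifting $H\in{\rm Rep}_\cA(D)$, so that $\cA(H)={\rm Ab}(D)/\ker F$ and $F_H\pi=F$ for the quotient $\pi:{\rm Ab}(D)\to\cA(H)$. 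By naturality of $\Tmod(\cA)\cong{\rm Ex}(\cA[\T_H],\cA)$ the functor $r_H$ sends $H^{\T_H}$ to $H$, hence $r_H\circ H^{\T_H}\cong H$ as representations of $D$; by the uniqueness part of Theorem~\ref{Freyd} this forces $r_H\circ\Phi\cong F$. As $r_H$ is faithful and exact it reflects zero objects, so $\ker F=\ker(r_H\circ\Phi)=\ker\Phi$. Thus $\cA(H)$ and $\cA(\T_H)$ are the Serre quotient of ${\rm Ab}(D)$ by one and the same Serre subcategory, and the identity functor of ${\rm Ab}(D)$ induces an equivalence $\cA(H)\xrightarrow{\simeq}\cA(\T_H)$ commuting with $\pi$ and $\pi'$. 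Combined with the first step this gives the chain $\cA(H)\xrightarrow{\simeq}\cA(\T_H)\xrightarrow{\simeq}\cA[\T_H]$.

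Finally, for the claim about $r_H$, I would appeal to the universal property of the Serre localisation $\pi:{\rm Ab}(D)\to\cA(H)$: an exact functor out of $\cA(H)$ is determined up to natural equivalence by its precomposition with $\pi$, which must annihilate $\ker F$. Precomposing the composite $\cA(H)\xrightarrow{\simeq}\cA(\T_H)\xrightarrow{F_{\T_H}}\cA[\T_H]\xrightarrow{r_H}\cA$ with $\pi$ gives, by compatibility of the first equivalence with the quotient maps, $r_H\circ F_{\T_H}\circ\pi'=r_H\circ\Phi\cong F=F_H\circ\pi$; hence that composite is naturally equivalent to $F_H$, and in particular is faithful, being a composite of faithful exact functors. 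The main obstacle is purely organisational: one must keep straight the three Serre quotients and their defining functors and check that each comparison equivalence is compatible with the relevant quotient map out of ${\rm Ab}(D)$ — once that bookkeeping is in place, every step is forced by a uniqueness clause, either Theorem~\ref{Freyd} or the universal property of a Serre quotient.
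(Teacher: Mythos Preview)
Your proposal is correct and follows essentially the same approach as the paper: apply the argument of Theorem~\ref{compare} to $\T_H$ to get $\cA(\T_H)\simeq\cA[\T_H]$, use faithfulness of $r_H$ (from conservativity of $H$ as a $\T_H$-model) together with $r_H\circ\Phi\cong F$ to conclude $\ker F=\ker\Phi$, and hence that $\cA(H)$ and $\cA(\T_H)$ are the same Serre quotient of ${\rm Ab}(D)$. The paper's proof is compressed into a single sentence (``Since $r_H$ is faithful, $\cA(H)$ and $\cA[\T_H]$ are quotients of ${\rm Ab}(D)$ by the same Serre subcategory''), while you have unpacked the bookkeeping explicitly, but the content is the same.
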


In particular, applying this to the case of Nori's singular homology representation $H^{\rm sing}$ in $\cA = {\rm Ab}$ we get 
\begin{cor}${\rm EHM}\cong \cA (H^{\rm sing})\cong \cA (\T_{H^{\rm sing}})\cong \cA [\T_{H^{\rm sing}}]$
\end{cor}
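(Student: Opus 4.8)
The plan is to deduce this corollary entirely from the Corollary immediately preceding it, together with the $R$-linear variant of our Theorem specialised to $R=\Z$. First I would recall that, by its very construction, Nori's category ${\rm EHM}$ is the category $\cC(T)$ produced by Nori's Theorem from the representation $T=H^{\rm sing}:D\to\Z\mbox{-}{\rm mod}$, where $D$ is the quiver described in the Introduction and $H^{\rm sing}$ sends $(X,Y,i)$ to the finitely generated abelian group $H_i^{\rm sing}(X(\C),Y(\C))$. The only bookkeeping point here is that this $D$ coincides with the Nori diagram of $\cC^{\square}$ for the pair $(\cC,\cM)=(\cV_k,\ \text{closed immersions})$: unwinding the definition of $\cC^{\square}$, an object $(X,Y)$ is a closed immersion $Y\hookrightarrow X$, the commutative squares $\square$ furnish exactly the functoriality arrows $(X,Y,i)\to(X',Y',i)$, and the morphisms $\partial:(Y,Z)\to(X,Y)$ reproduce the boundary arrows $(X,Y,i)\to(Y,Z,i-1)$; hence the two quivers agree, and $H^{\rm sing}$ is the singular homology model of $\T$ in $\cA={\rm Ab}$ referenced in \cite{BV}.

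Next I would invoke the Corollary at the end of Section~\ref{secRlin}: since $R=\Z$ is Noetherian and $H^{\rm sing}$ takes values in $\cA=\Z\mbox{-}{\rm mod}$, that corollary yields a natural equivalence $\cC(H^{\rm sing})\simeq\cA(H^{\rm sing})$. Combined with the identification of the previous paragraph this gives ${\rm EHM}\cong\cA(H^{\rm sing})$.

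Finally, I would apply the Corollary immediately preceding the present one to the model $H=H^{\rm sing}\in{\rm Rep}_{\rm Ab}(D)$. Since $H^{\rm sing}$ is visibly a conservative $\T_{H^{\rm sing}}$-model (here $\T_{H^{\rm sing}}$ is the regular theory obtained from $\T$ by adjoining all regular axioms valid in $H^{\rm sing}$), that corollary provides equivalences $\cA(H^{\rm sing})\xrightarrow{\simeq}\cA(\T_{H^{\rm sing}})\xrightarrow{\simeq}\cA[\T_{H^{\rm sing}}]$. Concatenating the three equivalences produces the asserted chain ${\rm EHM}\cong\cA(H^{\rm sing})\cong\cA(\T_{H^{\rm sing}})\cong\cA[\T_{H^{\rm sing}}]$.

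I do not expect a real obstacle: the only content beyond quoting earlier results is the first, purely combinatorial, step of matching Nori's original quiver and representation with the Nori diagram of $\cC^{\square}$ and its singular homology model as set up in this section. Once that dictionary is fixed, the statement is just a composition of equivalences already established.
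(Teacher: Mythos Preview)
Your proposal is correct and follows essentially the same route as the paper, which simply says ``applying this to the case of Nori's singular homology representation $H^{\rm sing}$ in $\cA = {\rm Ab}$'' and quotes the immediately preceding corollary. You are more explicit than the paper in sourcing the first equivalence ${\rm EHM}\cong\cA(H^{\rm sing})$ via the Corollary of Section~\ref{secRlin} (with $R=\Z$), whereas the paper leaves this implicit (it could equally be read off the identification ${\rm EHM}=\cA[\T_{H^{\rm sing}}]$ already recorded in the Introduction together with the chain just established); either justification is fine and the arguments are interchangeable.
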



\end{document}